\newtheorem{theorem}{Theorem}[section]
\newtheorem{lemma}[theorem]{Lemma}
\newtheorem{definition}[theorem]{Definition}
\newcommand{\Gal}{\operatorname{\mathrm{Gal}}}
\newcommand{\Cay}{\operatorname{\mathrm{Cay}}}
\newcommand{\Irr}{\operatorname{\mathrm{Irr}}}
\title[Eigenvalues of normal Cayley graphs]{Rationality conditions for the eigenvalues of normal finite Cayley graphs}
\author[C. Godsil]{Chris Godsil}
\address{Chris Godsil, Department of Combinatorics and Optimization, University of Waterloo, Canada}
\email{cgodsil@math.uwaterloo.ca}
\author[P. Spiga]{Pablo Spiga}
\address{Pablo Spiga, Dipartimento di Matematica e Applicazioni,
University of Milano-Bicocca, Italy} \email{pablo.spiga@unimib.it}
\thanks{Address correspondence to P. Spiga,
E-mail: pablo.spiga@unimib.it}
\begin{document}
\begin{abstract}
Given a finite group $G$, we say that a subset $C$ of $G$ is power-closed if, for every $x\in C$ and $y\in \langle x\rangle$ with $\langle x\rangle=\langle y\rangle$, we have $y\in C$.

In this paper we are interested in finite Cayley digraphs $\Cay(G,C)$ over $G$ with connection set $C$, where $C$ is a union of conjugacy classes of $G$. We show that each eigenvalue of $\Cay(G,C)$ is integral if and only if $C$ is power-closed. This result will follow from a discussion of some more general rationality conditions on the eigenvalues of $\Cay(G,C)$. 
\end{abstract}
\subjclass[2000]{05C50, 20C15}

\keywords{conjugacy classes, eigenvalues, irreducible characters}
\maketitle

\section{Introduction}\label{intro}

Let $G$ be a finite group and let $C$ be a subset of $G$. The \emph{Cayley digraph} $\Cay(G,C)$ over $G$ with connection set $S$ is the digraph with vertex set $G$ and with $(g,h)$ being a directed arc if and only if $gh^{-1}\in C$. The \emph{eigenvalues} of a digraph  are the eigenvalues of its adjacency matrix.

In this paper we are concerned with some rationality conditions on the eigenvalues of $\Cay(G,C)$ when $C$ is a union of $G$-conjugacy classes. (Cayley digraphs of this form  are sometimes called \emph{normal}.)
In particular, we are interested in the case that each eigenvalue of $\Cay(G,C)$ is rational. Observe that since the eigenvalues of a digraph are  algebraic integers (being the zeros of the characteristic polynomial of a matrix with integer coefficients), we see that if $\lambda$ is a rational eigenvalue of $\Cay(G,C)$, then $\lambda$ is actually an integer.

We say that $C\subseteq G$ is \emph{power-closed} if, for every $x\in C$ and $y\in \langle x\rangle$ with $\langle y\rangle=\langle x\rangle$, we have $y\in C$. 
\begin{theorem}\label{thrm1}
Let $G$ be a finite group and let $C$ be a union of conjugacy classes of $G$. Then each eigenvalue of $\Cay(G,C)$ is an integer if and only if $C$ is power-closed.
\end{theorem}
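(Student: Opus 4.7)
The plan is to deduce the theorem from the classical formula for the spectrum of a normal Cayley graph, combined with the cyclotomic Galois action. Since $C$ is closed under conjugation, the class sum $\sum_{c \in C}c$ is central in $\mathbb{C}[G]$ and acts as a scalar on each isotypic component of the regular representation; unpacking this I would recover the eigenvalue list
\[
\lambda_\chi \;=\; \frac{1}{\chi(1)}\sum_{c \in C}\chi(c), \qquad \chi \in \Irr(G),
\]
each with multiplicity $\chi(1)^2$. Let $n$ be the exponent of $G$, so every $\lambda_\chi$ lies in $\mathbb{Q}(\zeta_n)$; being an algebraic integer, such an element is a rational integer if and only if it is fixed by each $\sigma \in \Gamma := \Gal(\mathbb{Q}(\zeta_n)/\mathbb{Q})$. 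The key input I will use is that, for each $k$ coprime to $n$, the Galois automorphism $\sigma_k \colon \zeta_n \mapsto \zeta_n^k$ satisfies $\sigma_k(\chi(g)) = \chi(g^k)$ for every $g \in G$ and $\chi \in \Irr(G)$, because $\chi(g)$ is a sum of $|g|$-th roots of unity.

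For the ``if'' direction I would argue that power-closure makes $c \mapsto c^k$ a bijection of $C$ onto itself for every $k$ coprime to $n$ (the image lies in $C$ because $c^k$ generates $\langle c\rangle$, and the map is invertible since $k$ is a unit modulo $|c|$), whence
\[
\sigma_k(\lambda_\chi) \;=\; \frac{1}{\chi(1)}\sum_{c\in C}\chi(c^k) \;=\; \lambda_\chi.
\]
Being fixed by all of $\Gamma$, the eigenvalue $\lambda_\chi$ is then rational, and so integral. For the converse, assume each $\lambda_\chi$ is an integer, fix $k$ coprime to $n$, and set $C^k := \{c^k : c\in C\}$; this is again a union of conjugacy classes, as conjugation commutes with exponentiation. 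The invariance $\sigma_k(\lambda_\chi)=\lambda_\chi$ rewrites as $\sum_{c\in C}\chi(c) = \sum_{d\in C^k}\chi(d)$ for every $\chi \in \Irr(G)$, using that $c\mapsto c^k$ is injective on $C$ (invert modulo $n$). Since the irreducible characters span the space of class functions, the class functions $\mathbf{1}_C$ and $\mathbf{1}_{C^k}$ must coincide, forcing $C = C^k$. Finally, given $x\in C$ and $y\in\langle x\rangle$ with $\langle y\rangle=\langle x\rangle$, I write $y=x^k$ with $\gcd(k,|x|)=1$ and use the Chinese Remainder Theorem to replace $k$ by an integer $k' \equiv k \pmod{|x|}$ coprime to $n$; then $y = x^{k'} \in C^{k'} = C$.

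The only genuine subtlety is this last step: one must be able to lift an exponent coprime to $|x|$ to one coprime to the global exponent $n$, and this works precisely because every prime dividing $|x|$ already divides $n$. The rest is essentially bookkeeping with three standard ingredients: the character formula for the eigenvalues of a normal Cayley graph, the explicit Galois action $\sigma_k(\chi(g))=\chi(g^k)$ on cyclotomic character values, and Schur orthogonality used to recover a union of conjugacy classes from its character sums.
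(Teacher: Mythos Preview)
Your argument is correct, and the forward direction matches the paper's: both compute $\sigma_k(\chi(c))=\chi(c^k)$ and use that $c\mapsto c^k$ permutes $C$ when $C$ is power-closed. The converse, however, is handled quite differently. The paper deduces Theorem~\ref{thrm1} from a more general statement (eigenvalues lie in an intermediate field $K$ iff $C$ is a union of $\Gamma_K$-classes), and for that converse it inducts on $|x|$: it induces a faithful linear character of $\langle x\rangle$ up to $G$, expands $\sum_{z\in C}\Theta(z)$ in powers of a primitive $|x|$th root of unity, and uses the $\mathbb{Q}$-linear independence of the primitive roots to force $x^t\in C$. Your route is shorter and more conceptual: from $\sigma_k(\lambda_\chi)=\lambda_\chi$ you extract $\sum_{c\in C}\chi(c)=\sum_{d\in C^k}\chi(d)$ for all $\chi$, and then invoke that irreducible characters span class functions to conclude $\mathbf{1}_C=\mathbf{1}_{C^k}$ directly, with no induction or induced characters needed. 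This argument in fact generalises verbatim to the paper's Theorem~2 (replace ``coprime to $n$'' by ``$t\in\Gamma_K$''), so nothing is lost. The final lifting step is also handled differently: the paper appeals to Dirichlet's theorem on primes in arithmetic progression, whereas your Chinese Remainder argument is more elementary and entirely sufficient---just note that the primes of $n$ already dividing $|x|$ are automatically avoided by $k$, and CRT lets you dodge the remaining ones.
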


As every power-closed subset $C$ is inverse-closed (that is, $C=C^{-1}$), if follows that if each eigenvalue of $\Cay(G,C)$ is an integer, then $\Cay(G,C)$ is an undirected graph. Theorem~\ref{thrm1} gives a rather efficient (and linear-algebra-free) test to check when a Cayley digraph has only integer eigenvalues. 

We note that, aside from its inherent interest, there are other reasons to consider this question.
Let $X$ be a graph on $n$ vertices with adjacency matrix $A$. A \textsl{continuous quantum walk}
of graph is specified by the family of matrices
\[
	U(t) := \exp(itA),\quad (t\in\mathbb{R}).
\]
If $u\in V(X)$ we use $e_u$ to denote the standard basis vector in $\mathbb{R}^n$ indexed by $u$.
We say that $X$ is \textsl{periodic} at $u$ if there is a complex scalar $\gamma$ of norm $1$ 
and a positive time $t$ such that
\[
	U(t)e_a = \gamma e_a.
\]
For surveys on this topic see, e.g., \cite{cgsurv,Kempe2003}.
In \cite{Saxena2007} Saxena, Severini and Shparlinski showed that if $X$ was a circulant, then
$X$ was periodic at a vertex if and only if the eigenvalues of $X$ were integers.
Subsequently it was shown in \cite{Godsil2008} that this conclusion held for any vertex-transitive 
graph, not just for circulants. This work has motivated the search for nice classes of vertex-transitive graphs with integer eigenvalues. 

For abelian groups, our theorem is a well-known and classical result of Bridges and 
Mena~\cite[Theorem~$2.4$]{BridgesMena} (observe that for an abelian group $G$,  every subset of $G$ is 
a union of $G$-conjugacy classes). In particular, Theorem~\ref{thrm1} generalizes the work of Bridges 
and Mena by dropping the hypothesis of $G$ being abelian and by replacing it with a natural condition 
on the connection set.

Theorem~\ref{thrm1} will follow at once from a slightly more general theorem. Before giving its statement we need some preliminary notation, which we will use throughout the whole paper, and some observations. Here we follow closely~\cite{Serre}. 

Let $G$ be a finite group and let $C$ be a union of conjugacy classes of $G$. From~\cite{Babai} or~\cite{Persi}, we get that the eigenvalues of $\Cay(G,C)$ are
\begin{equation}\label{eq1}\frac{1}{\chi(1)}\sum_{x\in C}\chi(x),
\end{equation}
as $\chi$ runs through the set of irreducible complex characters of $G$. (We denote this set by $\Irr_{\mathbb{C}}(G)$.) 

Following Serre~\cite[Section~$9.1$]{Serre}, we denote by $R_{\mathbb{C}}(G)$ the subring of the class functions of $G$ generated by $\Irr_{\mathbb{C}}(G)$, that is, $$R_{\mathbb{C}}(G)=\bigoplus_{\chi\in \Irr_{\mathbb{C}}(G)}\mathbb{Z}\chi.$$
More generally, given a field $K$ with $\mathbb{Q}\leq K\leq \mathbb{C}$, we denote by $R_K(G)$ the subring of $R_{\mathbb{C}}(G)$ generated by the characters of the representations of $G$ over $K$.

We let $m$ be the least common multiple of the order of the elements of $G$, $\mathbb{Q}(m)$  the algebraic field obtained by adjoining the $m$th roots of unity to $\mathbb{Q}$ and $\Gamma_{\mathbb{Q}}$ the Galois group of $\mathbb{Q}(m)$ over $\mathbb{Q}$. By a well-known theorem of Brauer~\cite[Theorem~$24$]{Serre}, we have $R_{\mathbb{C}}(G)=R_{\mathbb{Q}(m)}(G)$, that is, every complex irreducible representation of $G$ is realizable over $\mathbb{Q}(m)$. In particular, every  $\chi\in \Irr_{\mathbb{C}}(G)$ has values in $\mathbb{Q}(m)$ and hence, from~\eqref{eq1}, every normal Cayley digraph $\Cay(G,C)$ has all of its eigenvalues in $\mathbb{Q}(m)$.

Now, let $\varepsilon$ be a primitive $m$th root of unity. From a celebrated theorem of Gauss, the $m$th cyclotomic polynomial is irreducible over $\mathbb{Q}$ and hence $\Gamma_{\mathbb{Q}}\cong (\mathbb{Z}/m\mathbb{Z})^*$ (where  $(\mathbb{Z}/m\mathbb{Z})^*$ denotes the invertible elements of   the ring $\mathbb{Z}/m\mathbb{Z}$).  Here we identify $\Gamma_{\mathbb{Q}}$ with  $ (\mathbb{Z}/m\mathbb{Z})^*$ under this isomorphism. More precisely, for $\sigma\in \Gamma_{\mathbb{Q}}$, there exists a unique $t\in (\mathbb{Z}/m\mathbb{Z})^*$ with $\sigma(\varepsilon)=\varepsilon^t$.

Finally, given a field $K$ with $\mathbb{Q}\leq K\leq \mathbb{Q}(m)$, we denote by $\Gamma_K$ the image of $\Gal(\mathbb{Q}(m)/K)$ in $ (\mathbb{Z}/m\mathbb{Z})^*$, and if $t\in \Gamma_K$, we let $\sigma_t$ denote the corresponding element of $\Gal(\mathbb{Q}(m)/K)$.

For $s\in G$ and for an integer $n$, the element $s^n\in G$ depends only on the residue class of $n$ modulo the order of $s$, and hence only on $n$ modulo $m$. Therefore, $s^t$ is defined for each $t\in \Gamma_K$, and the group $\Gamma_K$ induces an action on the underlying set of $G$.

\begin{definition}\label{def}{\rm We say that $g,h\in G$ are $\Gamma_K$-conjugate, if there exists $t\in \Gamma_K$ such that $g$ and $h^t$ are conjugate in $G$.  Clearly, being $\Gamma_K$-conjugate is an equivalence relation in $G$, and we call $\Gamma_K$-conjugacy classes its equivalence classes.

Observe that when $K=\mathbb{Q}(m)$, we have $\Gamma_K=1$ and hence the $\Gamma_K$-conjugacy classes coincide with the $G$-conjugacy classes. Moreover, when $K=\mathbb{Q}$, we have $\Gamma_K=(\mathbb{Z}/m\mathbb{Z})^*$ and hence two elements $g$ and $h$ of $G$ are $\Gamma_K$-conjugate if there exists $t\in (\mathbb{Z}/m\mathbb{Z})^*$ with $g$ conjugate to $h^t$ in $G$.}
\end{definition}

We are finally ready to state the main result of this paper.
\begin{theorem}\label{thrm2}Let $G$ be a finite group, let $C$ be a union of $G$-conjugacy classes, let $m$ be the least common multiple of the order of the elements of $G$ and let $K$ be a field with $\mathbb{Q}\leq K\leq\mathbb{Q}(m)$.  Then each eigenvalue of $\Cay(G,C)$ lies in $K$ if and only if  $C$ is a union of  $\Gamma_K$-conjugacy classes. 
\end{theorem}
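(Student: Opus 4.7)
The plan is to exploit the explicit formula $\lambda_\chi=\frac{1}{\chi(1)}\sum_{x\in C}\chi(x)$ from \eqref{eq1} together with the key compatibility between the Galois action on $\mathbb{Q}(m)$ and the power action on $G$. The central identity I want to establish first is
\[
\sigma_t(\chi(x)) = \chi(x^t), \qquad \text{for all } \chi\in\Irr_{\mathbb{C}}(G),\; x\in G,\; t\in\Gamma_K.
\]
This is standard but crucial: choose a representation $\rho$ affording $\chi$, write $\rho(x)$ in a basis of eigenvectors, note that its eigenvalues are $m$th roots of unity (since $x$ has order dividing $m$), and observe that $\sigma_t$ raises each eigenvalue to the $t$th power, producing the trace of $\rho(x)^t=\rho(x^t)$. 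Once this identity is in hand, the rest is essentially bookkeeping, and the only genuine ingredient is the orthogonality of irreducible characters.

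For the direction ($\Leftarrow$), I will assume $C$ is a union of $\Gamma_K$-conjugacy classes, which in particular means $C^t:=\{x^t:x\in C\}$ equals $C$ for every $t\in\Gamma_K$. Since $x\mapsto x^t$ is a bijection on $G$, reindexing gives
\[
\sigma_t(\lambda_\chi)=\frac{1}{\chi(1)}\sum_{x\in C}\chi(x^t)=\frac{1}{\chi(1)}\sum_{y\in C^t}\chi(y)=\frac{1}{\chi(1)}\sum_{y\in C}\chi(y)=\lambda_\chi.
\]
So each $\lambda_\chi$ is fixed by every element of $\Gal(\mathbb{Q}(m)/K)$, and since $\lambda_\chi\in\mathbb{Q}(m)$ by Brauer's theorem, Galois theory yields $\lambda_\chi\in K$.

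For the direction ($\Rightarrow$), fix $t\in\Gamma_K$. The goal is to prove $C^t=C$; combined with the hypothesis that $C$ is a union of $G$-conjugacy classes (and the observation that $G$-conjugation commutes with $x\mapsto x^t$, so $C^t$ is also a union of $G$-conjugacy classes), this will give that $C$ is closed under the $\Gamma_K$-action, hence a union of $\Gamma_K$-conjugacy classes. The hypothesis $\lambda_\chi\in K$ says $\sigma_t(\lambda_\chi)=\lambda_\chi$, which via the computation above becomes
\[
\sum_{y\in C^t}\chi(y)=\sum_{x\in C}\chi(x) \qquad \text{for every } \chi\in\Irr_{\mathbb{C}}(G).
\]
Applying this to the irreducible character $\overline{\chi}$ (or equivalently taking complex conjugates) yields $\langle\mathbf{1}_{C^t},\chi\rangle=\langle\mathbf{1}_{C},\chi\rangle$ for every irreducible $\chi$, where $\mathbf{1}_C$ and $\mathbf{1}_{C^t}$ are the characteristic functions of $C$ and $C^t$, viewed as class functions on $G$. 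Because $\Irr_{\mathbb{C}}(G)$ is an orthonormal basis of the space of class functions, this forces $\mathbf{1}_C=\mathbf{1}_{C^t}$, i.e.\ $C=C^t$.

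The only subtle point is the compatibility identity $\sigma_t(\chi(x))=\chi(x^t)$; everything else is then a clean application of Galois invariance in one direction and of character orthogonality in the other. I do not foresee a serious obstacle beyond making sure the bookkeeping between $\Gamma_K\subseteq(\mathbb{Z}/m\mathbb{Z})^*$ and $\Gal(\mathbb{Q}(m)/K)$ is carried out consistently (in particular, using that $x\mapsto x^t$ is a bijection of $G$ because $\gcd(t,m)=1$).
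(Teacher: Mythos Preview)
Your argument is correct. The forward direction ($\Leftarrow$) is essentially identical to the paper's: both establish the key identity $\sigma_t(\chi(s))=\chi(s^t)$ via the eigenvalues of $\rho(s)$ and then conclude by Galois invariance.

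For the converse ($\Rightarrow$), however, your route is genuinely different from the paper's and in fact more streamlined. The paper fixes $x\in C$, inducts on $|x|$, and works with the induced character $\Theta=\mathrm{Ind}_{\langle x\rangle}^G(\theta)$ where $\theta(x)=\eta$ is a primitive $|x|$th root of unity. It expands $|x|\sum_{z\in C}\Theta(z)$ as $\sum_i a_i\eta^i$ with explicit non-negative integer coefficients $a_i$ counting pairs $(z,y)$ with $y^{-1}zy=x^i$, applies $\sigma_t$, uses the inductive hypothesis to kill the contributions from non-primitive $\eta^i$, and finally invokes the $\mathbb{Q}$-linear independence of the primitive $|x|$th roots of unity to deduce $a_t=a_1>0$, hence $x^t\in C$. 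Your argument bypasses all of this: you observe directly that $\mathbf{1}_C$ and $\mathbf{1}_{C^t}$ are both class functions (the latter because conjugation commutes with $t$th powers) having the same inner product with every irreducible character, and conclude $\mathbf{1}_C=\mathbf{1}_{C^t}$ by orthogonality. This is shorter and more conceptual; the paper's approach, on the other hand, produces the explicit counting coefficients $a_i$ and isolates the role of the cyclotomic basis, which may be of independent interest but is not needed for the bare statement.
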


\section{Proofs}\label{proofs}

Theorem~\ref{thrm1} follows from Theorem~\ref{thrm2} (applied with $K=\mathbb{Q}$) and the following lemma.
\begin{lemma}\label{lemma1}
Let $G$ be a finite group and let $C$ be a union of $G$-conjugacy classes. Then $C$ is power-closed if and only if $C$ is a union of $\Gamma_{\mathbb{Q}}$-conjugacy classes.
\end{lemma}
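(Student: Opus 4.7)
The plan is to unravel both conditions into concrete statements about powers of a single element and then bridge them using the Chinese Remainder Theorem. The generators of $\langle x\rangle$ are precisely the elements $x^s$ with $\gcd(s,|x|)=1$, so ``$C$ is power-closed'' is equivalent to: for every $x\in C$ and every integer $s$ coprime to $|x|$, $x^s\in C$. On the other hand, since $\Gamma_{\mathbb{Q}}=(\mathbb{Z}/m\mathbb{Z})^*$, saying that $C$ is a union of $\Gamma_{\mathbb{Q}}$-conjugacy classes means: for every $x\in C$ and every $t\in\mathbb{Z}$ with $\gcd(t,m)=1$, the element $x^t$ lies in $C$ (using that $C$ is already a union of $G$-conjugacy classes).

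For the direction ``power-closed $\Rightarrow$ union of $\Gamma_{\mathbb{Q}}$-conjugacy classes'', I would simply observe that if $x\in C$ and $\gcd(t,m)=1$, then since $|x|$ divides $m$ we also have $\gcd(t,|x|)=1$, so $x^t$ generates $\langle x\rangle$ and hence lies in $C$ by the power-closed hypothesis. Combined with $C$ being $G$-conjugation invariant, this gives closure under $\Gamma_{\mathbb{Q}}$-conjugation.

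The reverse direction requires the small arithmetic step that is the only real content of the lemma. Given $x\in C$ and a generator $y=x^s$ of $\langle x\rangle$ (so $\gcd(s,|x|)=1$), I want to produce $t\in\mathbb{Z}$ with $t\equiv s\pmod{|x|}$ and $\gcd(t,m)=1$; then $x^t=y$ and $\Gamma_{\mathbb{Q}}$-closure of $C$ forces $y\in C$. I would construct $t$ via the Chinese Remainder Theorem applied prime-by-prime to the primes dividing $m$: for primes $p\mid |x|$ I set $t\equiv s\pmod{p^{v_p(|x|)}}$ (these residues are automatically coprime to $p$ because $\gcd(s,|x|)=1$), while for primes $p\mid m$ with $p\nmid|x|$ I choose any residue coprime to $p$. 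The resulting $t$ satisfies $t\equiv s\pmod{|x|}$ and is coprime to every prime divisor of $m$, hence $t\in(\mathbb{Z}/m\mathbb{Z})^*$.

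There is no real obstacle; the only subtlety is to notice that the definition of ``power-closed'' uses exponents coprime to $|x|$, while $\Gamma_{\mathbb{Q}}$ uses exponents coprime to $m$, and to verify that the CRT allows one to freely lift the former to the latter. Once that is observed, both implications are immediate.
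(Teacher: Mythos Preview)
Your proposal is correct. The forward direction is essentially identical to the paper's argument. For the reverse direction, however, you take a different and more elementary route: to lift an exponent $s$ coprime to $|x|$ to an exponent $t$ coprime to $m$ with $t\equiv s\pmod{|x|}$, you invoke the Chinese Remainder Theorem prime-by-prime, whereas the paper appeals to Dirichlet's theorem on primes in arithmetic progressions to find a prime $t>m$ in the residue class $s\pmod{|x|}$ (such a prime is automatically coprime to $m$). Your CRT argument is preferable here, since Dirichlet's theorem is a far deeper result than what the situation requires; the surjectivity of the natural map $(\mathbb{Z}/m\mathbb{Z})^*\to(\mathbb{Z}/|x|\,\mathbb{Z})^*$ is exactly what is needed, and CRT delivers it immediately. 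One small point of presentation: when you write ``set $t\equiv s\pmod{p^{v_p(|x|)}}$'' for primes $p\mid|x|$, you should note (or make explicit) that any lift of this congruence to $p^{v_p(m)}$ remains coprime to $p$, so that the system of congruences really determines a class in $(\mathbb{Z}/m\mathbb{Z})^*$; this is implicit in your argument but worth stating.
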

\begin{proof}
We first suppose that $C$ is power-closed and we show that $C$ is a union of $\Gamma_{\mathbb{Q}}$-conjugacy classes. Let $x\in C$ and let $y\in G$ be $\Gamma_{\mathbb{Q}}$-conjugate to $x$. Then, by definition, there exists $t\in (\mathbb{Z}/m\mathbb{Z})^*$ with $y^t$ conjugate  to $x$ in $G$, that is, $y^t=x^g$ for some $g\in G$. Now, $x^g\in C$ and $\langle y\rangle=\langle y^t\rangle=\langle x^g\rangle$, thus $y\in C$ because $C$ is power-closed.

Conversely, we suppose that $C$ is a union of $\Gamma_{\mathbb{Q}}$-conjugacy classes and we show that $C$ is power-closed. Let $x\in C$ and $y\in \langle x\rangle$ with $\langle y\rangle=\langle x\rangle$. Then $y=x^{t'}$, for some integer $t'$ coprime to the order $|x|$ of $x$. From Dirichlet's theorem on primes in arithmetic progression, there exists a prime $t\in \{t'+\ell|x|\mid \ell\in \mathbb{Z}\}$ with $t>m$. We get that the residue class of $t$ in $\mathbb{Z}/m\mathbb{Z}$ is invertible. Now $x^t=x^{t'}=y$ and hence $x$ and $y$ are $\Gamma_{\mathbb{Q}}$-conjugate. Thus $y\in C$.
\end{proof}

\begin{proof}[Proof of Theorem~$\ref{thrm2}$]
Suppose that $C$ is a union $C_1\cup\cdots \cup C_\ell$ of $\Gamma_K$-conjugacy classes. From~\eqref{eq1}, we need to show that $\sum_{x\in C}\chi(x)/\chi(1)\in K$, for every $\chi\in \Irr_{\mathbb{C}}(G)$. For simplicity, we write $e_\chi=\sum_{x\in C}\chi(x)/\chi(1)$. As
$$e_\chi=\frac{1}{\chi(1)}\sum_{x\in C}\chi(x)=\left(\frac{1}{\chi(1)}\sum_{x\in C_1}\chi(x)\right)+\cdots+\left(\frac{1}{\chi(1)}\sum_{x\in C_\ell}\chi(x)\right),$$
it suffices to consider the case that $C=C_1$ is a $\Gamma_K$-conjugacy class. In particular, from the definition of $\Gamma_K$-conjugacy class we get $C=(x^{t_0})^G\cup \cdots \cup(x^{t_\ell})^G$, for some $x\in G$ and some $t_0,\ldots,t_\ell\in \Gamma_K$. (We denote by $x^G$ the conjugacy class of $x$ under $G$.) Observe that the action of the group $\Gamma_K$ on $C$ induces a transitive action of $\Gamma_K$ on $\{(x^{t_0})^G,\ldots, (x^{t_\ell})^G\}$.

Fix $\chi\in \Irr_{\mathbb{C}}(G)$ and   let $\rho$ be a representation of $G$ affording the character $\chi$.
Let $t\in \Gamma_K$ and let $\sigma$ be the corresponding element in $\Gal(\mathbb{Q}(m)/K)$. For $s\in G$, let $\omega_1,\ldots,\omega_{\chi(1)}$ be the eigenvalues of $\rho(s)$. As $|s|$ is a divisor of $m$, we get that $\omega_i$ is an $m$th root of unity and hence the eigenvalues of $\rho(s^t)$ are the $\omega_1^t,\ldots,\omega_{\chi(1)}^t$. Thus we have
\begin{equation}\label{eq2}(\chi(s))^\sigma=\left(\sum_{i=1}^{\chi(1)}\omega_i\right)^\sigma=\sum_{i=1}^{\chi(1)}\omega_i^t=\chi(s^t).
\end{equation}
Now applying $\sigma$ to $e_\chi$, using~\eqref{eq2} and recalling that the set $C$ is invariant under taking $t$th powers, we get
$e_\chi^\sigma=e_\chi$. In particular, $e_\chi^\sigma=e_\chi$ for every $\sigma\in \Gal(\mathbb{Q}(m)/K)$. Since $\mathbb{Q}(m)/K$ is a Galois extension, we have $e_\chi\in K$. 
 
\smallskip

Conversely, suppose that each eigenvalue of $\Cay(G,C)$ lies in $K$. Since $C$ is a union of $G$-conjugacy classes, for showing that $C$ is also a union of $\Gamma_K$-conjugacy classes it suffices to prove that, for each $x\in C$ and for each $t\in \Gamma_K$, we have $x^t\in C$. We argue by induction on $|x|$. Clearly, if $|x|=1$, then there is nothing to prove. Now assume that $|x|>1$. Let $\eta\in\mathbb{C}$ be a primitive $|x|$th root of unity, let $\theta:\langle x\rangle\to \mathbb{C}$ be the irreducible character of $\langle x\rangle$ with $\theta(x)=\eta$, and let $\Theta=\mathrm{Ind}_{\langle x\rangle}^G(\theta)$, that is, $\Theta$ is the character of $G$ obtained by inducing $\theta$ from $\langle x\rangle$ to $G$. From~\cite[page~$55$]{Serre}, we have
\begin{equation}\label{eq3}
\Theta(s)=\frac{1}{|x|}\sum_{\substack{y\in G\\y^{-1}sy\in \langle x\rangle}}\theta(y^{-1}sy).
\end{equation}

Since $\Theta$ is a character of $G$, $\Theta$ is an integral linear combination of the irreducible characters of $G$. Moreover, since every eigenvalue of $\Cay(G,C)$ lies in $K$, from~\eqref{eq1} we obtain $\sum_{z\in C}\Theta(z)\in K$. Write $e_\Theta:=|x|\sum_{z\in C}\Theta(z)$. From~\eqref{eq3}, we get
\begin{eqnarray}\label{eq4}\nonumber
e_\Theta&=&\sum_{z\in C}\sum_{\substack{y\in G\\ y^{-1}zy\in \langle x\rangle }}\theta(y^{-1}zy)=\sum_{z\in C}\sum_{i=0}^{|x|-1}\sum_{\substack{y\in G\\ y^{-1}zy=x^i}}\theta(x^i)=\sum_{z\in C}\sum_{i=0}^{|x|-1}\sum_{\substack{y\in G\\ y^{-1}zy=x^i}}\eta^i\\
&=&\sum_{i=0}^{|x|-1}\sum_{z\in C}\sum_{\substack{y\in G\\ y^{-1}zy=x^i}}\eta^i=a_0\eta^0+a_1\eta^1+\cdots +a_{|x|-1}\eta^{|x|-1},
\end{eqnarray} 
where $a_0,\ldots,a_{|x|-1}$ are non-negative integers. More precisely, 
\begin{equation}\label{eq5}
a_i=|\{(z,y)\mid z\in C,y\in G,y^{-1}zy=x^i\}|.
\end{equation} 
Furthermore, $a_1>0$ because $x\in C$.

Now, let $t\in \Gamma_K$ and let $\sigma$ be its corresponding element in $\Gal(\mathbb{Q}(m)/K)$. Applying $\sigma$ on both sides of~\eqref{eq4}  we get
$$e_\Theta=e_\Theta^\sigma=a_0\eta^0+a_1\eta^t+a_2\eta^{2t}+\cdots+a_{|x|-1}\eta^{(|x|-1)t}$$
and hence
\begin{equation}\label{eq6}
(a_0-a_{0})\eta^0+(a_1-a_{t^{-1}})\eta^1+(a_2-a_{2t^{-1}})\eta^2+\cdots +(a_{|x|-1}-a_{(|x|-1)t^{-1}})\eta^{|x|-1}=0,
\end{equation}
where the indices are computed modulo $|x|$. Now, observe that from our induction hypothesis, for every divisor $i$ of $|x|$ with $1<i<|x|$, the elements $x^i$ and $x^{it}$ are  either both in $C$ or both in $G\setminus C$. In the first case,  from~\eqref{eq5}, we have $a_i=a_{it}$. In the second case, $a_{i}=0$ and $a_{it}=0$ and hence again $a_i=a_{it}$. It follows that the only summands in~\eqref{eq6} that are possibly not zero correspond to the primitive $|x|$th roots of unity. Therefore ~\eqref{eq6} gives rise to the linear equation $$\sum_{\substack{i=0\\\mathrm{Gcd}(i,|x|)=1}}^{|x|-1}(a_i-a_{it^{-1}})\eta^i=0.$$  From a celebrated theorem of Gauss, $\left(\eta^i\mid 0\leq i\leq |x|-1, {\mathrm{Gcd}(i,|x|)=1}\right)$ is a basis for $\mathbb{Q}(\eta)$ over $\mathbb{Q}$ and hence $a_i=a_{it^{-1}}$, for every $i$. In particular, $a_t=a_1>0$ and hence $x^t\in C$ from~\eqref{eq5}.
\end{proof}
\thebibliography{10}
\bibitem{Babai}L.~Babai, Spectra of Cayley Graphs, \textit{J. Combin. Theory B.} \textbf{2}, (1979),
180--189.

\bibitem{BridgesMena}W.~G.~Bridges, R.~A.~Mena, Rational {$G$}-matrices with rational eigenvalues, \textit{J. Combin. Theory Ser. A} \textbf{32}, (1982).

\bibitem{Persi}P.~Diaconis, M.~Shahshahani, Generating a Random Permutation with Random Transpositions, \textit{Zeit. f\"{u}r Wahrscheinlichkeitstheorie} \textbf{57} (1981), 159--179.

\bibitem{Godsil2008}
C.~Godsil, Periodic Graphs, \textit{Electronic J. Combinatorics} \textbf{18}(1):$\backslash$\#23, June 2011.

\bibitem{cgsurv}
C.~Godsil, State transfer on graphs, \textit{Discrete Math.} \textbf{312} (2012), 129--147.

\bibitem{Kempe2003}
J.~Kempe, Quantum random walks - an introductory overview,
  \textit{Contemporary Physics} \textbf{44} (2003), 307--327. \href{http://arxiv.org/abs/quant-ph/0303081}{arxiv:0303081}.

\bibitem{Saxena2007}N.~Saxena, S.~Severini, I.~Shparlinski, Parameters of integral  circulant graphs and periodic quantum dynamics, \textit{International Journal on Quantum Computation} \textbf{5}(3) (2007), 417--430. \href{http://arxiv.org/abs/quant-ph/0703236}{arXiv:quant-ph/0703236}.

\bibitem{Serre}J-P.~Serre, Linear Representations of Finite Groups, Graduate Texts in Mathematics \textbf{42},  Springer-Verlag, 1977.
\end{document}